\crefname{section}{Section}{Sections}
\crefname{subsection}{\S}{\S\S}
\theoremstyle{plain}
\newtheorem{lemma}{Lemma}[section]
\newtheorem{proposition}[lemma]{Proposition}
\newtheorem{corollary}[lemma]{Corollary}
\newtheorem{theorem}[lemma]{Theorem}
\theoremstyle{nonumberplain}
\theoremstyle{plain}
\newtheorem{definition}[lemma]{Definition}
\newtheorem{notation}[lemma]{Notation}
\newtheorem{remark}[lemma]{Remark}
\crefname{definition}{definition}{definitions}
\crefname{ex}{example}{examples}
\crefname{remark}{remark}{remarks}
\crefname{convention}{convention}{conventions}
\crefname{lemma}{lemma}{lemmas}
\crefname{proposition}{proposition}{propositions}
\crefname{corollary}{corollary}{corollaries}
\crefname{theorem}{theorem}{theorems}
\crefname{assumption}{assumption}{Assumptions}
\crefname{equation}{}{}
\theoremstyle{nonumberplain}
\newtheorem{proof}{Proof}
\newcommand\bC{{\mathbb C}}
\newcommand\cH{{\mathcal H}}
\newcommand\cT{{\mathcal T}}
\DeclareMathOperator{\id}{id}
\newcommand{\qedhere}{\mbox{}\hfill\ensuremath{\blacksquare}}
\title{Translations in quantum groups}
\author{Alexandru Chirvasitu}
\begin{document}

\date{}

\newcommand{\Addresses}{{
  \bigskip
  \footnotesize

  \textsc{Department of Mathematics, University at Buffalo, Buffalo,
    NY 14260-2900, USA}\par\nopagebreak \textit{E-mail address}:
  \texttt{achirvas@buffalo.edu}

}}

\maketitle

\begin{abstract}
  Let $H$ be the Hopf $C^*$-algebra of continuous functions on a (locally) compact quantum group of either reduced or full type. We show that endomorphisms of $H$ that respect its right regular comodule structure are translations by elements of the largest classical subgroup of $G$.

  Furthermore, we show that for compact $G$ such an endomorphism is automatically an automorphism regardless of the quantum group norm on the $C^*$-algebra $H$.
\end{abstract}

\noindent {\em Key words: locally compact quantum group; translation; Peter-Weyl algebra}

\vspace{.5cm}

\noindent{MSC 2010: 20G42; 46L51; 22D25}


\section*{Introduction}

The starting point for the present note is the observation that for a compact group $G$ the map sending $g\in G$ to the translation
\begin{equation*}
  C(G)\ni f\mapsto f(g -)\in C(G)
\end{equation*}
induces an isomorphism between the group $G$ and the monoid of $C^*$-algebra endomorphisms $\alpha:C(G)\to C(G)$ that respect the right comodule structure of $C(G)$ in the sense that
\begin{equation*}
    \begin{tikzpicture}[auto,baseline=(current  bounding  box.center)]
      \path[anchor=base] (0,0) node (hl) {$C(G)$} +(3,.5) node (hhu) {$C(G)\otimes C(G)$} +(3,-.5) node (hd) {$C(G)$} +(6,0) node (hhr) {$C(G)\otimes C(G)$};
      \draw[->] (hl) to[bend left=6] node[pos=.5,auto] {$\scriptstyle\Delta$} (hhu);
      \draw[->] (hl) to[bend right=6] node[pos=.5,auto,swap] {$\scriptstyle\alpha$} (hd);
      \draw[->] (hhu) to[bend left=6] node[pos=.5,auto] {$\scriptstyle \alpha \otimes\mathrm{id} $} (hhr);
      \draw[->] (hd) to[bend right=6] node[pos=.5,auto,swap] {$\scriptstyle\Delta$} (hhr);      
    \end{tikzpicture}  
\end{equation*}
commutes. 

The function algebra $H=C(G)$ of a compact {\it quantum} group is similarly equipped with a comultiplication, and one can ask for a similar description of all {\it translations} of the quantum group: those endomorphisms $\alpha:H\to H$ for which the analogous diagram is commutative.

More generally, the same problem can be posed in the context of {\it locally} compact quantum groups in the sense of \cite{KV,kv-vna}.

To make this more precise, let $G$ be a locally compact quantum group, $H=C_0(G)$ the reduced $C^*$-algebra of functions on $G$ vanishing at infinity and $H^u=C_0^u(G)$ its universal analogue. We write $G_{cl}$ for the largest classical quantum subgroup of $G$: this is nothing but the group of characters $H^u\to \bC$ equipped with the convolution product. An element $\chi\in G_{cl}$ induces an endomorphism $\alpha$ of $H^u$ in the category of non-unital $C^*$-algebras (i.e. a non-degenerate morphism $H^u\to M(H^u)$) defined by
\begin{equation}\label{eq:9}
    \begin{tikzpicture}[auto,baseline=(current  bounding  box.center)]
    \path[anchor=base] (0,0) node (h1)
      {$H^u$} +(2,.5) node (hh) {$M(H^u\otimes H^u)$} +(5,0) node (h2) {$M(H^u)$.};
      \draw[->] (h1) to[bend left=6] node[pos=.5,auto] {$\scriptstyle\Delta$} (hh);
      \draw[->] (hh) to[bend left=6] node[pos=.5,auto] {$\scriptstyle \chi\otimes\mathrm{id}$} (h2);
      \draw[->] (h1) to[bend right=6] node[pos=.5,auto,swap] {$\scriptstyle\alpha$} (h2);
  \end{tikzpicture}
\end{equation}
Note that in fact $\alpha$ takes values in $H^u$, and furthermore it respects the right $H^u$ coactions on its domain and codomain as discussed before. $\alpha$ is, in other words, a {\it translation} of $H^u$. One can now ask
\begin{itemize}
\item whether this correspondence is an isomorphism between $G_{cl}$ and the monoid of translations of $H^u$;
\item whether every such $\alpha$ descends to a translation of the reduced version $H$;
\item if so, then do we once more obtain an isomorphism between $G_{cl}$ and the monoid of translations of $H$?
\end{itemize}

That the classical answers survive in the quantum setting is one of the main result of the paper (an aggregate of \Cref{cor.full,pr.red,th.lcqg-univ,th.lcqg-red}):

\begin{theorem}\label{th.intro-main}
  Let $G$ be a locally compact quantum group and $C_0(G)$ and $C_0^u(G)$ its reduced and respectively full $C^*$-algebras of functions vanishing at infinity.

  The correspondence $\chi\mapsto \alpha$ described in \Cref{eq:9} gives isomorphisms between $G_{cl}$ and the monoids of translations of both $C^u_0(G)$ and $C_0(G)$.
\end{theorem}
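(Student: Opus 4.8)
The plan is to prove the theorem in two halves: first that $\chi\mapsto\alpha_\chi$ is an injective monoid homomorphism into the translations of $H^u$ (and likewise of $H$), and then that every translation arises this way. For the first half I would verify directly from the coassociativity of $\Delta$ that $\alpha_\chi$ as defined by \Cref{eq:9} is an honest non-degenerate $*$-homomorphism $H^u\to M(H^u)$, that it intertwines the right regular coactions (this is immediate from coassociativity, exactly as in the classical picture), and that $\alpha_{\chi_1}\circ\alpha_{\chi_2}=\alpha_{\chi_1\ast\chi_2}$ where $\ast$ is convolution on $G_{cl}$; injectivity follows because $\chi=\varepsilon\circ\alpha_\chi$ recovers the character from the translation by composing with the counit. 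That the image lands in $H^u$ rather than merely $M(H^u)$ (and, in the compact case, that $\alpha$ is automatically unital/bijective) is recorded in the statements \Cref{cor.full,pr.red} cited in the excerpt, so I may invoke those.

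The substantive half is surjectivity: given a translation $\alpha$ of $H^u$, I must produce a character $\chi\in G_{cl}$ with $\alpha=\alpha_\chi$. The natural candidate is $\chi:=\varepsilon\circ\alpha$, where $\varepsilon$ is the counit of $H^u$; one checks this is a character of $H^u$, hence an element of $G_{cl}$. The task is then to show $\alpha=(\chi\otimes\id)\circ\Delta$. Here I would use the comodule-intertwining hypothesis: the commuting square says $(\alpha\otimes\id)\circ\Delta=\Delta\circ\alpha$. Applying $\varepsilon$ in the first leg of the codomain gives $(\varepsilon\circ\alpha\otimes\id)\circ\Delta=(\varepsilon\otimes\id)\circ\Delta\circ\alpha=\alpha$, using the counit axiom $(\varepsilon\otimes\id)\circ\Delta=\id$ on the right-hand side. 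The left-hand side is exactly $(\chi\otimes\id)\circ\Delta=\alpha_\chi$, so $\alpha=\alpha_\chi$ and we are done. The same computation works verbatim for the reduced algebra $H=C_0(G)$ once one knows $H$ carries a coassociative $\Delta$ and that translations of $H$ lift compatibly — which again is supplied by the cited \Cref{th.lcqg-red,pr.red}.

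The main obstacle I anticipate is purely analytic bookkeeping rather than algebraic: the reduced $C^*$-algebra $C_0(G)$ need not possess a bounded counit, so the slick argument of the previous paragraph cannot be run directly on $H$. The fix is to route everything through $H^u$: a translation $\alpha$ of $H=C_0(G)$ must be shown to lift to a translation $\alpha^u$ of $H^u=C_0^u(G)$ — this is where the reduced-to-universal comparison results (the reducing morphism $\lambda:H^u\to H$ and its behaviour under $\Delta$) are essential — and then the universal case applies and one pushes the resulting formula back down along $\lambda$. One must be careful that the lift is unique and that non-degeneracy and multiplier subtleties (e.g. extending $\chi\otimes\id$ to $M(H^u\otimes H^u)$, checking strict continuity) are handled; but none of this is conceptually deep, and the algebraic skeleton above — candidate $\chi=\varepsilon\circ\alpha$, then collapse one leg of the intertwining square with the counit axiom — is the whole idea. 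I would close by assembling the pieces: \Cref{th.lcqg-univ} gives the universal statement, \Cref{th.lcqg-red} the reduced one, \Cref{cor.full,pr.red} the well-definedness and descent of the correspondence, and together they are precisely \Cref{th.intro-main}.
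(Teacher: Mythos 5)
Your treatment of the universal case is essentially the paper's: the candidate $\chi=\varepsilon\circ\alpha$, collapsing one leg of the intertwining square with the counit axiom, is exactly the argument behind \Cref{eq:pa-lc} and \Cref{th.lcqg-univ} (the paper additionally checks that the resulting bijection is a homeomorphism via Gelfand--Naimark, which you should not omit if ``isomorphism'' is read topologically, but that is minor).

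The genuine gap is in the reduced case. Your fix for the absence of a bounded counit on $H=C_0(G)$ is to ``lift'' a translation $\alpha$ of $H$ to a translation $\alpha^u$ of $H^u$ along the reducing morphism $\lambda\colon H^u\to H$ and then push the universal formula back down. No such lifting mechanism is available: $\lambda$ goes from universal to reduced, and the universal lifting theory for morphisms of locally compact quantum groups (e.g.\ \cite{SLW12}) applies to maps intertwining the comultiplications as bialgebra morphisms, $\Delta\circ\alpha=(\alpha\otimes\alpha)\circ\Delta$, whereas a translation only satisfies the comodule condition $\Delta\circ\alpha=(\alpha\otimes\id)\circ\Delta$ of \Cref{eq:8}. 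Asserting that every translation of $H$ comes from one of $H^u$ is in fact equivalent to the hard half of the statement, so as written the step is circular. The paper's actual route is different and is the key idea you are missing: use left invariance of the Haar weight \Cref{eq:2} together with the commutativity of \Cref{eq:8} to show $\varphi\circ\alpha=\varphi$; this lets $\alpha$ extend to a normal self-map of $L^{\infty}(G)$ in the GNS representation of $\varphi$; one then invokes the von Neumann--algebraic classification of such maps (Kalantar--Neufang \cite{kn}, Theorems 3.7, 3.9 and 3.11) to identify $\alpha$ with translation by an element of $G_{cl}$. Without this weight-invariance/$L^{\infty}$ step (or a genuine substitute for it), your argument does not establish surjectivity of $\chi\mapsto\alpha_\chi$ onto $\cT(C_0(G))$. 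Note also that in the compact case the paper avoids $C^u(G)$ altogether for this direction by restricting translations to the dense Hopf $*$-algebra $\mathrm{Pol}(G)$, where the algebraic counit always exists; that is the correct low-tech substitute there, but it has no analogue for general locally compact $G$.
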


Von Neumann algebra versions of this result appear in \cite{can} (for Kac algebras) and \cite{kn}. More specifically, \cite[Theorems 3.7, 3.9 and 3.11]{kn} jointly amount to the fact that von Neumann algebra morphisms $L^{\infty}(G)\to L^{\infty}(G)$ that preserve the right coaction are in bijection with $G_{cl}$ in the fashion described in \Cref{th.intro-main}.

Let us now specialize to the case when $G$ is compact, i.e. $C(G):=C_0(G)$ is unital. In general, one can equip the unique dense Hopf $*$-subalgebra of $C(G)$ with a number of different norms with that of $C(G)$ being minimal and that of $C^u(G)$ maximal (see \cite[discussion preceding Theorem 2.1]{bmt} for justification of the claim that the norm making the Haar state faithful is minimal).

The intermediate norms interpolating between $C(G)$ and $C^u(G)$ are often referred to as {\it exotic}; see e.g. \cite{ks,wrsm,r-wrsm,bg}. Although I do not know whether \Cref{th.intro-main} holds for exotic quantum group norms, it is nevertheless the case that every translation of a quantum group $C^*$-algebra is bijective (see \Cref{cor.bij}):

\begin{theorem}\label{th.intro-sec}
Every translation of the underlying $C^*$-algebra of a compact quantum group is an automorphism of said $C^*$-algebra.   
\end{theorem}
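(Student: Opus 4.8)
The plan is to reduce the statement to the injectivity of a translation, since surjectivity will then follow from a compactness/Peter--Weyl argument. Let $\alpha\colon B\to B$ be a translation of a $C^*$-algebra $B$ obtained by completing the canonical dense Hopf $*$-algebra $\cA=\cO(G)$ of a compact quantum group in some quantum group norm. The first step is to observe that $\alpha$ necessarily restricts to an endomorphism of $\cA$ itself: because $\alpha$ intertwines the comultiplication, it preserves the subspace of matrix coefficients of any finite-dimensional corepresentation, and $\cA$ is the linear span of all such coefficients. Concretely, if $u=(u_{ij})$ is a unitary corepresentation matrix, then $(\alpha\otimes\id)\Delta(u_{ij})=\sum_k \alpha(u_{ik})\otimes u_{kj}=\Delta(\alpha(u_{ij}))$ forces each $\alpha(u_{ij})$ to lie in the span of the $u_{kj}$, so $\alpha$ maps the finite-dimensional $\Delta$-coinvariant space spanned by the $j$-th "column" into itself. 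Thus $\alpha|_{\cA}$ is a unital $*$-algebra endomorphism of $\cO(G)$ respecting the right coaction.

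The second step is to analyze $\alpha$ at the purely algebraic level, which is essentially the content of the earlier results in the paper in the algebraic (Hopf-algebraic) setting: a right-coaction-preserving endomorphism of $\cO(G)$ is given by a character, i.e. $\alpha=(\chi\otimes\id)\circ\Delta$ for some $*$-character $\chi\colon\cO(G)\to\bC$. Here one uses that the space of such translations is identified with the algebra maps $\cO(G)\to\bC$ via $\chi\mapsto(\chi\otimes\id)\Delta$ and $\alpha\mapsto\varepsilon\circ\alpha$, these being mutually inverse by the counit axioms. Since characters of $\cO(G)$ form a \emph{group} under convolution --- every character $\chi$ has convolution inverse $\chi\circ S$, where $S$ is the (algebraic) antipode, and this inverse is again a $*$-character because $G_{cl}$ is a genuine compact group --- the endomorphism $\alpha$ has an algebraic two-sided inverse, namely $\beta=((\chi\circ S)\otimes\id)\circ\Delta$, satisfying $\alpha\circ\beta=\beta\circ\alpha=\id_{\cA}$.

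The third step is to promote this algebraic inverse to a $C^*$-algebraic one. The map $\beta$ is again a translation of $\cA$ in exactly the same sense as $\alpha$; the only issue is whether it extends continuously to $B$. For this I would show that any translation of $\cA$ is contractive with respect to \emph{every} quantum group norm: indeed, writing $\alpha=(\chi\otimes\id)\Delta$ with $\chi$ a $*$-character, $\alpha$ is a composition of the comultiplication (a $*$-homomorphism, hence contractive into the appropriate completed tensor product once one is slightly careful about which norm is used --- here one uses that $\Delta$ extends to the maximal completion $C^u(G)$, and the universal property to push down) with $\chi\otimes\id$, which is a $*$-homomorphism out of a $C^*$-algebra and therefore contractive. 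More carefully: $\chi$ extends to $C^u(G)$, so $(\chi\otimes\id)$ makes sense on $C^u(G)\otimes B$ and is contractive there, and $\Delta\colon\cA\to\cA\otimes\cA$ followed by the inclusion into $C^u(G)\otimes B$ is contractive for the $B$-norm on the source by the universal property of $C^u(G)$ together with faithfulness considerations; combining these shows $\|\alpha(a)\|_B\le\|a\|_B$. Applying the same reasoning to $\beta$ gives $\|\beta(a)\|_B\le\|a\|_B$, so both extend to bounded maps on $B$, and by density these extensions remain mutually inverse. Hence $\alpha$ is a $C^*$-algebra automorphism of $B$.

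\medskip

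\textbf{Main obstacle.} The genuinely delicate point is the third step: the norm estimate showing that every algebraic translation is contractive for an \emph{arbitrary} quantum group norm, not just the reduced or universal ones. The subtlety is that an exotic norm on $\cA$ need not be comparable to the norms on $\cA\otimes\cA$ coming from the obvious completions, so one cannot naively say "$\Delta$ is bounded." The right way around this is to factor through the universal completion on the codomain-of-$\Delta$ side --- i.e. to view $\alpha$ as $\cA\xrightarrow{\Delta}\cA\odot\cA\hookrightarrow C^u(G)\odot B\to C^u(G)\otimes_{\max} B\xrightarrow{\chi\otimes\id}B$ --- and to check that the first composite map $\cA\to C^u(G)\otimes_{\max}B$ is contractive for the $B$-norm on the source. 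This in turn follows because its composition with $\chi'\otimes\id$ for \emph{any} character $\chi'$, in particular $\chi'=\varepsilon$, recovers a $B$-to-$B$ map, and one shows the family of such maps separates points isometrically; alternatively one invokes that on $\cA$ the universal norm dominates and $\Delta$ is an isometry $C^u(G)\to C^u(G)\otimes_{\max}C^u(G)$, then restricts. Once contractivity is in hand the argument closes formally, so this estimate is where essentially all the work lies.
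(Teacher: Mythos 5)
Your steps 1 and 2 are fine and agree with the paper's setup: a translation preserves each matrix coalgebra, is determined on $\mathrm{Pol}(G)$ by the $*$-character $\chi=\varepsilon\circ\alpha$, and the $*$-characters form a group under convolution, so $\beta=((\chi\circ S)\otimes\id)\circ\Delta$ is an algebraic inverse. The gap is exactly where you locate it, in step 3, and the justifications you sketch do not close it. The universal property of $C^u(G)$ gives $\|\Delta(a)\|_{C^u(G)\otimes_{\max}C^u(G)}\le\|a\|_u$, i.e.\ an estimate against the \emph{universal} norm; restricting this to $\mathrm{Pol}(G)$ carried with the norm of $H$ says nothing, since $\|a\|_H\le\|a\|_u$ is the wrong direction. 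Your alternative ("compose with $\chi'\otimes\id$ and show the family separates points isometrically") is circular: those compositions are precisely the translations $\alpha_{\chi'}$ whose boundedness on the given completion is the point at issue, and the only one known to be bounded, $\chi'=\varepsilon$, yields at best a \emph{lower} bound $\|a\|_H\le\|\Delta(a)\|$. In fact, the statement you are trying to prove in step 3 -- that $(\chi\otimes\id)\Delta$ is contractive for \emph{every} quantum group norm and \emph{every} character $\chi$ -- is exactly what would make \Cref{th.intro-main} hold for exotic norms, and the paper explicitly states this is not known; so no soft argument of the kind you propose can be expected to settle it. Nor can you retreat to "only the one character $\chi\circ S$ I need": it is tied to the bounded map $\alpha$ only through the antipode, which is in general unbounded, so boundedness of $\alpha$ does not transfer to $\beta$ by any direct estimate of this shape.

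The paper avoids the estimate entirely by a compactness argument (\Cref{pr.cpct} and \Cref{cor.bij}): the monoid $\cT(H)$ embeds into the compact group $U_H=\prod_{\gamma}U_{d_\gamma}$ via the unitaries implementing $\alpha$ on each $H_\gamma$, its image is \emph{closed} (a pointwise limit of translations is multiplicative, comodule-preserving, and still contractive for the norm of $H$, because on each finite-dimensional $H_\gamma$ all separated vector space topologies coincide), and a compact cancellative monoid is a group. Thus the inverse of $\alpha$ is automatically an element of $\cT(H)$ -- morally, a limit of powers of $\alpha$, each of which is contractive -- and one never needs to bound $((\chi\circ S)\otimes\id)\Delta$ by hand. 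If you want to salvage your outline, replace step 3 by this closure-plus-cancellativity argument; as written, step 3 is a genuine gap.
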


This answers a question posed by Piotr M. Hajac.

\subsection*{Acknowledgements}

This work is supported in part by NSF grant DMS-1801011.

I am grateful for enlightening input from Ludwik D\k{a}browski, Piotr Hajac, Pawe{\l} Kasprzak and Mariusz Tobolski on earlier drafts.

Additionally, the suggestions of the anonymous referee have improved the content significantly, pointing out the references \cite{can,kn} and much more. 

\section{Preliminaries}\label{se.prel}

\subsection{Locally compact quantum groups}\label{subse.prel-lcqg}

We will work with {\it locally} compact quantum groups (LCQGs for short) of either reduced or full type, in the sense of \cite{KV} and respectively \cite{univ}. For locally compact quantum group morphisms we refer to \cite{SLW12}; see also the preliminary section of \cite{DKSS} for a quick review of the theory.

The category of non-unital $C^*$-algebras we work with is the usual one, where morphisms $A\to B$ are non-degenerate homomorphisms $A\to M(B)$ to the {\it multiplier algebra} $M(B)$ of $B$ (this is the space denoted by $\mathrm{Mor}(A,B)$ in \cite[$\S$1.1]{DKSS}). Throughout the present subsection $H=C_0(G)$ will be the underlying non-unital $C^*$-algebra of a reduced LCQG $G$, and $H_u$ the associated $C^*$-algebra (whose dual object we denote, as before, by $G_u$).

\cite[Section 1]{KV} contains all of the $C^*$ weight theory we will need (much more in fact). For a weight $\varphi$ on a (possibly non-unital) $C^*$-algebra $A$ we denote
\begin{align*}
  M^+_{\varphi} &:= \{x\in A^+\ |\ \varphi(x)<\infty\},\ M_{\varphi}:=\mathrm{span}\; M^+_{\varphi},\\
  N_{\varphi}&:=\{x\in A\ |\ \varphi(x^*x)<\infty\}.
\end{align*}

We write $C_0(G)$ and $C_0^u(G)$ for the reduced and full function $C^*$-algebras attached to the LCQG $G$. As has become customary in the literature, denote by $\varphi$ the left-invariant Haar weight. It is {\it proper} (i.e. non-zero, lower semicontinuous and densely defined), faithful on $C_0(G)$, and left invariance means that
\begin{equation}\label{eq:2}
  \varphi((\omega\otimes \id)\Delta x) = \omega(1)\varphi(x),\ \forall x\in M^+_{\varphi},\ \forall \omega\in C_0(G)_+^*
\end{equation}
(see \cite[Definition 2.2]{KV}). Recall (e.g. \cite[$\S$1.2]{KV}) that proper weights admit unique extensions to the multiplier algebras of their domains; we take such extensions for granted.

We have a GNS representation $(\cH_{\varphi},\pi_{\varphi},\Lambda)$ attached to $\varphi$ as in \cite[Definition 1.2]{KV}: $\cH_{\varphi}$ is a Hilbert space and
\begin{align*}
  \pi_{\varphi}&:C^u_0(G)\to C_0(G)\to B(\cH_{\varphi})\\
  \Lambda&:N_{\varphi}\to \cH_{\varphi}
\end{align*}
satisfying $\Lambda(xy)=\pi_{\varphi}(x)\Lambda(y)$ for all relevant $x$ and $y$. We regard $C_0(G)$ as a subalgebra of $B(\cH_{\varphi})$ via $\pi_{\varphi}$, and $L^{\infty}(G)$ is its von Neumann closure therein.

The {\it multiplicative unitary} $W\in B(\cH_{\varphi}^{\otimes 2})$ is defined by
\begin{equation*}
  W^*(\Lambda(x)\otimes \Lambda(y)) = (\Lambda\otimes\Lambda)(\Delta(y)(x\otimes 1)),\ \forall x,y\in N_{\varphi}. 
\end{equation*}
Then $W$ induces the comultiplication on $L^{\infty}(G)$ via
\begin{equation*}
  \Delta(x) = W^*(1\otimes x)W,
\end{equation*}
and $W$ is contained in $M(C_0(\widehat{G})\otimes C_0(G))$ for the {\it Pontryagin dual} $\widehat{G}$ of $G$: by definition, $C_0(\widehat{G})$ is the norm-closure of
\begin{equation*}
\{(\omega\otimes\id)W\ |\ \omega\in B(\cH_{\varphi})^*\}
\end{equation*}
(\cite[Definition 8.1]{KV}). We have
\begin{align*}
  (\Delta\otimes\id)W &= W_{13}W_{23}\\
  (\id\otimes\Delta)W&=W_{13}W_{12},
\end{align*}
relations which we will use below (note that the right hand comultiplication in the second line of the display above is that of $C_0(\widehat{G})$).

\subsection{Compact quantum groups}\label{subse.prel-cqg}

Although these are technically examples of LCQGs (i.e. those for which $C_0(G)$ is unital), there are several aspects peculiar to the compact case that we outline here very briefly, referring to \cite{Wor98,KusTus99} for further background. 

The underlying Hopf $C^*$-algebra $H=C(G)$ has a unique dense Hopf $*$-algebra we denote by $\mathrm{Pol}(G)$ (the {\it Peter-Weyl algebra} of $G$). It is cosemisimple as a coalgebra and hence breaks up as a direct sum of matrix coalgebras, one for each irreducible right $H$-comodule.

\section{Translations}\label{se.tr}

Let $H=C(G)$ be the Hopf $C^*$-algebra underlying a CQG $G$.

\begin{definition}\label{def.tr}
  A {\it translation} of $G$ or $H$ is a $C^*$-algebra morphism
  $\alpha$ making the diagram
  \begin{equation}\label{eq:tr}
    \begin{tikzpicture}[auto,baseline=(current  bounding  box.center)]
      \path[anchor=base] (0,0) node (hl)
      {$H$} +(2,.5) node (hhu) {$H\otimes H$} +(2,-.5) node (hd) {$H$} +(4,0) node (hhr) {$H\otimes H$};
      \draw[->] (hl) to[bend left=6] node[pos=.5,auto] {$\scriptstyle\Delta$} (hhu);
      \draw[->] (hl) to[bend right=6] node[pos=.5,auto,swap] {$\scriptstyle\alpha$} (hd);
      \draw[->] (hhu) to[bend left=6] node[pos=.5,auto] {$\scriptstyle \alpha \otimes\mathrm{id} $} (hhr);
      \draw[->] (hd) to[bend right=6] node[pos=.5,auto,swap] {$\scriptstyle\Delta$} (hhr);      
    \end{tikzpicture}
  \end{equation}
  commutative.

  We denote by $\cT(H)$ or $\cT(G)$ the monoid of translations of $G$ equipped with {\it opposite} composition.
\end{definition}

\begin{remark}
  The convention that we compose translations backwards obviates the need to reverse the multiplication on $G_{cl}$ in \Cref{th.intro-main}.   
\end{remark}

We topologize $\cT(H)$ for $H=C(G)$ in the standard way by the topology of pointwise convergence in the norm of $H$.

Now let $\alpha\in \cT(G)$. For each $\gamma\in \mathrm{Irr}(G)$ the matrix coalgebra $H_\gamma\subset H$ is preserved by $\alpha$. If $d_\gamma$ is the dimension of $\gamma$ and $u_\gamma=(u_{ij})\in M_{d_\gamma}(H)$ is a unitary matrix whose entries form a basis for $H_\gamma$, then the action of $\alpha$ on $H_\gamma$ is implemented by multiplying $u_\gamma$ by a unitary $d_\gamma\times d_\gamma$ matrix $T_\gamma=T^\alpha_\gamma$.

Since $\varphi\in \cT(G)$ is uniquely determined by its action on the dense Hopf $*$-subalgebra $\mathrm{Pol}(G)$, we thus have an {\it embedding}
  \begin{equation}\label{eq:1}
    \cT(G)\subseteq U_H:=\prod_{\gamma\in \mathrm{Irr}(G)}U_{d_\gamma} 
  \end{equation}
  sending $\alpha$ to the tuple $(T^\alpha_\gamma)_\gamma$.

Our first observation is

\begin{proposition}\label{pr.cpct}
The embedding \Cref{eq:1} realizes $\cT(G)$ as a closed sub-monoid of the right hand side. 
\end{proposition}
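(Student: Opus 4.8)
The plan is to verify the two closure conditions separately: that the image of $\cT(G)$ in $U_H=\prod_\gamma U_{d_\gamma}$ is closed under the (opposite) multiplication, and that it is topologically closed. The multiplicative claim is essentially formal: if $\alpha,\beta\in\cT(G)$, then $\beta\circ\alpha$ is again a $C^*$-algebra morphism, and pasting the two defining squares \Cref{eq:tr} for $\alpha$ and $\beta$ along their common edge shows that $\beta\circ\alpha$ satisfies \Cref{eq:tr} as well. On each matrix coalgebra $H_\gamma$ this composite acts by right-multiplying $u_\gamma$ by the product $T^\alpha_\gamma T^\beta_\gamma$ of the two implementing unitaries, so the image is closed under the monoid operation and the embedding \Cref{eq:1} is a monoid morphism. (One should also note that the identity of $U_H$, the tuple of identity matrices, is hit by $\id_H\in\cT(G)$.)

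The substance is topological closedness. Suppose a net $\alpha_\lambda\in\cT(G)$ has images $(T^{\alpha_\lambda}_\gamma)_\gamma$ converging in $U_H$ to some tuple $(T_\gamma)_\gamma$, i.e. $T^{\alpha_\lambda}_\gamma\to T_\gamma$ in $U_{d_\gamma}$ for each $\gamma$. I would first build a candidate limit map $\alpha$ on the dense Hopf $*$-subalgebra $\mathrm{Pol}(G)=\bigoplus_\gamma H_\gamma$: on $H_\gamma$ let $\alpha$ send $u_\gamma$ to $u_\gamma T_\gamma$, i.e. declare $\alpha|_{H_\gamma}$ to be the linear map implemented by $T_\gamma$. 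Because convergence in $U_H$ is coordinatewise and each $H_\gamma$ is finite-dimensional, $\alpha_\lambda\to\alpha$ pointwise in norm on all of $\mathrm{Pol}(G)$. Pointwise norm limits of $*$-algebra homomorphisms are $*$-algebra homomorphisms, so $\alpha$ is a unital $*$-homomorphism on $\mathrm{Pol}(G)$; moreover $\alpha$ is contractive (each $T_\gamma$ being unitary, $\alpha$ restricts to an isometry of each $H_\gamma$ with its matrix-$*$-algebra structure, but more simply a $*$-homomorphism of $*$-algebras into a $C^*$-algebra is automatically contractive for the enveloping $C^*$-norm), hence extends to a $C^*$-algebra endomorphism of $H=C(G)$, which we again call $\alpha$. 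Finally, the intertwining identity $(\alpha\otimes\id)\Delta=\Delta\alpha$ holds on each $H_\gamma$ because it holds for every $\alpha_\lambda$ and both sides are norm-continuous in $\alpha$ on the finite-dimensional $H_\gamma$; it then holds on $\mathrm{Pol}(G)$ by linearity and on $H$ by density and continuity. Thus $\alpha\in\cT(G)$ and its image is the given limit tuple, proving closedness.

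I expect the only genuinely delicate point to be making sure the limit map $\alpha$, defined a priori only on the algebraic direct sum $\mathrm{Pol}(G)$, really does extend to a $C^*$-algebra morphism of $H=C(G)$ — one needs that a $*$-homomorphism out of $\mathrm{Pol}(G)$ which agrees with norm-limits of the honestly bounded maps $\alpha_\lambda$ is itself bounded in the $C(G)$-norm, which is where one invokes either the universal-$C^*$-norm estimate on $\mathrm{Pol}(G)$ or the uniform bound $\|\alpha_\lambda\|\le 1$ passing to the limit on each fixed element. Everything else — the monoid structure, coordinatewise convergence on finite-dimensional blocks, preservation of the coaction square under limits — is routine bookkeeping with the decomposition $\mathrm{Pol}(G)=\bigoplus_\gamma H_\gamma$ and the fact that $\alpha$ preserves each $H_\gamma$ (already recorded in the discussion preceding the proposition).
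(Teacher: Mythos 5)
Your argument follows the paper's proof in all essentials: identify the limit point in $U_H$ with a linear self-map of $\mathrm{Pol}(G)=\bigoplus_\gamma H_\gamma$, use finite-dimensionality of the blocks to upgrade coordinatewise convergence to pointwise norm convergence, conclude that the limit is a ($*$-)algebra and comodule endomorphism, and then extend it to $H$ by contractivity. The one step where you need to be careful is the contractivity claim, and there the two parenthetical justifications in your main text do not actually work for an arbitrary quantum group norm on $H$: the norm $H_\gamma$ inherits from $C(G)$ is not a matrix-algebra norm, so unitarity of $T_\gamma$ does not by itself make $\alpha$ isometric on $H_\gamma$; and automatic contractivity of a $*$-homomorphism out of $\mathrm{Pol}(G)$ is contractivity for the \emph{universal} $C^*$-norm, which dominates the norm of $H$, so it only yields a morphism $C^u(G)\to H$ rather than an endomorphism of $H$ --- recall that the point of \Cref{th.intro-sec} is precisely that $H$ may carry the reduced or an exotic norm. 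The argument you mention only in your closing paragraph is the correct one, and it is the paper's: for fixed $x$ lying in a finite-dimensional $V=\bigoplus_{\gamma\in F}H_\gamma$ preserved by all the maps, $\alpha_\eta(x)\to\alpha(x)$ in the norm of $H$ (all Hausdorff vector space topologies on $V$ coincide), so $\|\alpha(x)\|=\lim_\eta\|\alpha_\eta(x)\|\le\|x\|$ because each $\alpha_\eta$ is a $C^*$-endomorphism of $H$ and hence contractive. With that promoted to the actual justification (and the flawed alternatives dropped), your proof is complete and coincides with the paper's.
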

\begin{proof}
  We have already noted the injectivity of the map \Cref{eq:1}, and by its very definition it intertwines the composition of translations and the group operation on its codomain $U_H$. It thus remains to argue that the map is continuous and its image is closed.
  
  
Concerning the closure of $\cT(G)$ in $U_H$ via \Cref{eq:1} consider a net $(\alpha_{\eta})_{\eta}\subset \cT(G)$ converging to $\alpha\in U_H$. The latter can be identified with a self-map of
  \begin{equation*}
    \mathrm{Pol}(G) = \bigoplus_{\gamma} H_{\gamma}
  \end{equation*}
  as before, acting as multiplication by its $\gamma$-component unitary matrix on $H_{\gamma}$. This is a right comodule endomorphism of $\mathrm{Pol}(G)$. It is also an algebra morphism: multiplicativity is a condition on arbitrary pairs of elements, and hence a closed condition under pointwise convergence.

  It remains to argue that $\alpha$ must lift to an endomorphism of the $C^*$-algebra $H$ assuming that $\alpha_{\eta}$ are such endomorphisms. Equivalently, since $\mathrm{Pol}(G)$ is dense in $H$, it is enough to show that $\alpha$ is contractive on $\mathrm{Pol}(G)$ (with respect to the norm of $H$). 

  To see this, fix $x\in \mathrm{Pol}(G)$. $x$ is contained in some finite-dimensional subspace
  \begin{equation*}
    V=\bigoplus_{\gamma\in F}H_{\gamma} 
  \end{equation*}
  for a finite subset $F\subset \mathrm{Irr}(G)$. $V$ is preserved by $\alpha_{\eta}$ as well as $\alpha$, and hence
  \begin{equation*}
    \alpha_{\eta}(x)\to \alpha(x)
  \end{equation*}
in the unique separated vector space topology on $V$. In particular this is also the topology inherited from the norm $\|-\|$ on $H$, and hence 
  \begin{equation*}
    \|\alpha(x)\| = \lim_{\eta}\|\alpha_{\eta}(x)\|\le \|x\|. 
  \end{equation*}
  because $\alpha_{\eta}$ all lift to endomorphisms of the $C^*$-algebra $H$ and are thus contractive. This finishes the proof.
\end{proof}

As an immediate consequence of \Cref{pr.cpct} we have

\begin{corollary}\label{cor.bij}
  Every translation of $H=C(G)$ is an automorphism of the $C^*$-algebra $H$. 
\end{corollary}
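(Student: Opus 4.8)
The plan is to derive \Cref{cor.bij} from \Cref{pr.cpct} by a compactness-plus-semigroup argument applied coordinate-wise through the embedding \Cref{eq:1}. Fix a translation $\alpha\in\cT(G)$ with image $(T_\gamma)_\gamma\in U_H$. Since $\cT(G)$ is a closed sub-monoid of the compact group $U_H=\prod_\gamma U_{d_\gamma}$, the cyclic sub-monoid $\{\alpha^n\ :\ n\ge 1\}$ has compact closure $S\subseteq\cT(G)$, and $S$ is a compact sub-semigroup of $U_H$. The key elementary fact is that a compact sub-semigroup $S$ of a compact (Hausdorff topological) group contains an idempotent $e$; but the only idempotent in a group is the identity, so $e$ is the identity of $U_H$, i.e. the tuple $(\mathrm{id}_{H_\gamma})_\gamma$, which corresponds to $\mathrm{id}_H\in\cT(G)$. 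Concretely one can see $e=\mathrm{id}$ directly: in each coordinate the closure of $\{T_\gamma^n\}$ in the compact group $U_{d_\gamma}$ is a compact abelian sub-semigroup, hence a closed sub-group (closure of $\{T^n\}$ under the semigroup operation, with inverses obtained as limits of positive powers by the standard pigeonhole argument $T^{n_k}\to T$, $T^{n_k-1}\to T^{-1}$), so it is a sub-torus and in particular contains $\mathrm{id}_{d_\gamma}$.

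Having located $\mathrm{id}_H$ in the closure of $\{\alpha^n\}$, I would conclude as follows. There is a net $n_k$ with $\alpha^{n_k}\to\mathrm{id}_H$ in $\cT(G)$, and (passing to a further subnet, using compactness of $\cT(G)$ again) we may assume $\alpha^{n_k-1}\to\beta$ for some $\beta\in\cT(G)$. Since composition in $U_H$ is jointly continuous, $\alpha\circ\beta=\beta\circ\alpha=\lim_k\alpha^{n_k}=\mathrm{id}_H$ (here $\circ$ denotes composition of the underlying $C^*$-maps; the opposite composition convention in $\cT(G)$ is irrelevant to the bijectivity conclusion). Thus $\alpha$ has a two-sided inverse among $C^*$-algebra endomorphisms of $H$, so $\alpha$ is a $C^*$-algebra automorphism of $H$.

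The one point that needs a word of care — and the only place anything could go wrong — is the joint continuity of composition on $\cT(G)$ under the embedding \Cref{eq:1}. On $U_H=\prod_\gamma U_{d_\gamma}$ multiplication is jointly continuous because each factor $U_{d_\gamma}$ is a compact Lie group, and the embedding \Cref{eq:1} is a topological embedding onto a closed submonoid by \Cref{pr.cpct}; composition of translations corresponds (by the very construction of \Cref{eq:1}) to the coordinatewise group operation on the tuples $(T_\gamma)$, so it is jointly continuous in the subspace topology. Equivalently, one checks directly that if $\alpha_i\to\alpha$ and $\beta_i\to\beta$ pointwise in the norm of $H$, then for $x$ lying in a finite sub-sum $V=\bigoplus_{\gamma\in F}H_\gamma$ — which is preserved by all the maps in sight and carries a unique locally convex topology — we get $\alpha_i\beta_i(x)\to\alpha\beta(x)$, exactly as in the proof of \Cref{pr.cpct}. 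With this in hand the argument above goes through verbatim, and no further input beyond \Cref{pr.cpct} and the compact-semigroup idempotent fact is required.

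\begin{proof}
  By \Cref{pr.cpct} the monoid $\cT(G)$ embeds, via \Cref{eq:1}, as a closed sub-monoid of the compact group $U_H=\prod_{\gamma}U_{d_\gamma}$, with multiplication of translations corresponding to the (jointly continuous) coordinatewise group operation.

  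Fix $\alpha\in\cT(G)$ and let $S\subseteq\cT(G)$ be the closure of $\{\alpha^n\ :\ n\ge 1\}$; it is a compact sub-semigroup of $U_H$. In each coordinate $\gamma$ the corresponding set is the closure of $\{T_\gamma^n\ :\ n\ge 1\}$ in the compact abelian group $U_{d_\gamma}$; by the standard pigeonhole argument (a subnet $T_\gamma^{n_k}$ converges, whence $T_\gamma^{n_k-1}$ has a convergent subnet producing an inverse), this closure is in fact a closed subgroup of $U_{d_\gamma}$, hence contains $\mathrm{id}_{d_\gamma}$. Therefore the identity tuple lies in $S$, i.e. $\mathrm{id}_H\in S$.

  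Pick a net $n_k\ge 1$ with $\alpha^{n_k}\to\mathrm{id}_H$ in $\cT(G)$; passing to a subnet and using compactness of $\cT(G)$, we may also assume $\alpha^{n_k-1}\to\beta$ for some $\beta\in\cT(G)$. Joint continuity of composition gives
  \begin{equation*}
    \alpha\circ\beta=\beta\circ\alpha=\lim_k\alpha^{n_k}=\mathrm{id}_H,
  \end{equation*}
  where $\circ$ denotes ordinary composition of the underlying $C^*$-algebra morphisms. Hence $\alpha$ is invertible, and being a $*$-homomorphism with a $*$-homomorphism inverse, it is a $C^*$-algebra automorphism of $H$.
\end{proof}
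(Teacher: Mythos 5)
Your argument is correct, and it takes a genuinely different route from the paper's. The paper disposes of \Cref{cor.bij} in one stroke: by \Cref{pr.cpct}, $\cT(G)$ is a closed sub-monoid of the compact group $U_H$, hence cancellative, and it then invokes the general theorem (cited as \cite[Lemma B.1]{rs}) that compact cancellative semigroups are groups. You instead reprove the relevant special case by hand: the closure $S$ of $\{\alpha^n\colon n\ge 1\}$ in $U_H$ is a compact semigroup containing the identity, and extracting a convergent subnet of $\alpha^{n_k-1}$ produces a two-sided inverse for $\alpha$ inside the closed set $\cT(G)$. What your route buys is self-containedness --- the inverse is exhibited explicitly as a limit of powers of $\alpha$, with no external semigroup theorem needed; what the paper's route buys is brevity. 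Both arguments lean entirely on \Cref{pr.cpct}.

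One step in your written proof should be repaired, though the conclusion it aims at is correct. You deduce $\mathrm{id}_H\in S$ coordinatewise: the closure of $\{T_\gamma^n\}$ in each $U_{d_\gamma}$ contains $\mathrm{id}_{d_\gamma}$, ``therefore the identity tuple lies in $S$''. That inference is not valid as stated: the closure of $\{\alpha^n\}$ in the product is in general strictly smaller than the product of the closures of its coordinate projections; membership of the identity in each coordinate closure only says that for each $\gamma$ separately some power of $T_\gamma$ is close to the identity, whereas $\mathrm{id}_H\in S$ requires a single power $n$ working simultaneously for all $\gamma$ in any prescribed finite set. The fix is immediate and already implicit in your preliminary discussion: run the pigeonhole/subnet argument directly in the compact topological group $U_H$ (if infinitely many powers of $\alpha$ lie in a small neighborhood of some accumulation point $h$, then elements $\alpha^{n-m}$ with $m<n$ accumulate at the identity), or simply quote the fact you state in the preamble that a compact subsemigroup of a compact group contains an idempotent, necessarily the identity. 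With that rewording the proof is complete.
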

\begin{proof}
  We know from \Cref{pr.cpct} that $\cT=\cT(H)$ is a closed sub-monoid of a compact group $U_H$. Being a sub-monoid of a group $\cT$ is {\it cancellative} (i.e. $ab=ab'\Rightarrow b=b'$ and similarly for $ba=b'a$). This means that it is, in fact, a closed sub{\it group}: see e.g. \cite[Lemma B.1]{rs} for (a strengthening of) the well known result to the effect that compact cancellative semigroups are groups.
\end{proof}

\subsection{Classifying translations}

A simple observation: recall that for any algebra $A$, every (left, say) $A$-module endomorphism of $A$ is automatically of the form
\begin{equation*}
  A\ni x\mapsto xa
\end{equation*}
for some $a\in A$. In other words, $A$ is its own algebra of $A$-module endomorphisms.

Dually, for a coalgebra $C$ over a field $k$, every right $C$-comodule endomap $\alpha$ of $C$ will be of the form
\begin{equation*}
  c\mapsto \chi(c_1)c_2,
\end{equation*}
where $c\mapsto c_1\otimes c_2$ is Sweedler notation for comultiplication and $\chi:C\to k$ is a linear map: one simply sets $\chi=\varepsilon\circ \alpha$.

If furthermore $C=H$ is a Hopf algebra (or even just a bialgebra) and $\alpha$ is also a translation in the sense of \Cref{def.tr}, then $\chi:H\to k$ is easily seen to be an algebra map; it is, after all, just the composition
\begin{equation}\label{eq:pa}
  \begin{tikzpicture}[auto,baseline=(current  bounding  box.center)]
    \path[anchor=base] (0,0) node (h1)
      {$H$} +(2,.5) node (hh) {$H\otimes H$} +(4,0) node (h2) {$H$} +(6,0) node (k) {$k$};
      \draw[->] (h1) to[bend left=6] node[pos=.5,auto] {$\scriptstyle\Delta$} (hh);
      \draw[->] (hh) to[bend left=6] node[pos=.5,auto] {$\scriptstyle \chi\otimes\mathrm{id}$} (h2);
      \draw[->] (h1) to[bend right=6] node[pos=.5,auto,swap] {$\scriptstyle\alpha$} (h2);
      \draw[->] (h2) to node[pos=.5,auto] {$\scriptstyle\varepsilon$} (k);
      \draw[->] (h1) to[bend right=20] node[pos=.5,auto,swap] {$\scriptstyle\chi$} (k);      
  \end{tikzpicture}
\end{equation}
of $\alpha$ and the counit $\varepsilon$, both of which are algebra morphisms. Finally, $*$-structures come along for the ride for Hopf $*$-algebras (i.e. if $\alpha$ is a $*$-algebra endomorphism then so is $\chi:H\to\bC$, and conversely).

\begin{notation}\label{nt.pa}
  In order to indicate the dependence of $\alpha$ and $\chi$ on each other in \Cref{eq:pa}, we will sometimes denote them by $\alpha_\chi$ and $\chi_\alpha$ respectively.
\end{notation}

Now, if $H=C(G)$ is a Hopf $C^*$-algebra as we have been assuming and $\alpha\in\cT(G)$ is a translation, then the preceding discussion applies to the dense Hopf $*$-subalgebra $\mathrm{Pol}(H)$ (which is automatically preserved by $\alpha$). This means that the restriction of $\alpha$ to $\mathrm{Pol}(H)$ is of the form $\alpha_\chi$ (see \Cref{nt.pa}) for a unique character $\chi$ of the $*$-algebra $\mathrm{Pol}(H)$, or equivalently, of the full Hopf $C^*$-algebra $H_u$ with $\mathrm{Pol}(H_u)=\mathrm{Pol}(H)$.

\begin{notation}\label{nt.cl}
  For a CQG $H=C(G)$ we denote by $\mathrm{pt}(G)$ (for `points of $G$') the space of characters $H\to \bC$, or equivalently, the space of characters of the abelianization $H\to H_{cl}$ of $H$.
\end{notation}

To summarize:

\begin{proposition}\label{pr.t-in-char}
  Let $H=C(G)$ for a CQG $G$. Then, the map
  \begin{equation*}
    \cT(G)\ni \alpha\mapsto \chi_\alpha
  \end{equation*}
  is an embedding of $\cT(G)$ as a closed subgroup of the compact group $G_{cl}$.
  \qedhere
\end{proposition}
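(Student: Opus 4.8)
The plan is to assemble the statement from the pieces already in place and to verify the two remaining points: that $\chi_\alpha$ always lands in the group of characters of the \emph{full} algebra (i.e.\ in $G_{cl}$), and that the assignment $\alpha\mapsto\chi_\alpha$ is a \emph{continuous}, closed embedding of topological groups, not merely an injection of sets.

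First I would recall the setup from the discussion preceding \Cref{nt.cl}: a translation $\alpha\in\cT(G)$ restricts to an endomorphism of $\mathrm{Pol}(G)$ that is simultaneously a right $H$-comodule map and a $*$-algebra map, hence is of the form $\alpha_\chi$ for the unique character $\chi=\chi_\alpha=\varepsilon\circ\alpha$ of the $*$-algebra $\mathrm{Pol}(G)=\mathrm{Pol}(H_u)$. A $*$-character of $\mathrm{Pol}(H_u)$ is automatically bounded of norm one for the universal norm (it factors through the abelianization, which is a commutative $C^*$-algebra), so it extends uniquely to a character $H_u\to\bC$; that is exactly what it means to be an element of $G_{cl}$, and equivalently (via \Cref{nt.cl}) an element of $\mathrm{pt}(G)$. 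Conversely the map $\alpha\mapsto\chi_\alpha$ is injective because $\alpha$ is determined by its restriction to the dense subalgebra $\mathrm{Pol}(G)$, and that restriction is recovered from $\chi_\alpha$ by the formula $c\mapsto\chi_\alpha(c_1)c_2$. It is a monoid map into $G_{cl}$ (with convolution product) essentially by the same diagram-chase that shows $\chi\mapsto\alpha_\chi$ is multiplicative: composing translations corresponds to convolving characters, which is why the opposite composition was built into \Cref{def.tr}. Since \Cref{cor.bij} already tells us $\cT(G)$ is a group, its image is a subgroup of $G_{cl}$.

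Next I would check the topological claims. Continuity: if $\alpha_\eta\to\alpha$ pointwise in the norm of $H$, then in particular $\varepsilon(\alpha_\eta(x))\to\varepsilon(\alpha(x))$ for every $x$ (the counit being continuous on $H$, or at any rate continuous on each finite-dimensional $H_\gamma$), so $\chi_{\alpha_\eta}\to\chi_\alpha$ pointwise on $\mathrm{Pol}(G)$, which is the topology on $G_{cl}$. Closedness of the image: given a net $\chi_{\alpha_\eta}\to\chi$ in $G_{cl}$, the formulas $\alpha_\eta(c)=\chi_{\alpha_\eta}(c_1)c_2$ converge pointwise on $\mathrm{Pol}(G)$ to $c\mapsto\chi(c_1)c_2=:\alpha(c)$; by \Cref{pr.cpct}, $\cT(G)$ is closed in $U_H$ under exactly this pointwise convergence, so $\alpha\in\cT(G)$ and $\chi=\chi_\alpha$ lies in the image. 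Thus the embedding has closed image; being a continuous injective homomorphism of compact Hausdorff groups, it is automatically a homeomorphism onto that image, so $\cT(G)$ is realized as a closed subgroup of $G_{cl}$.

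The only genuinely substantive point, and the one I would dwell on, is the passage from "$\chi_\alpha$ is a $*$-character of the dense subalgebra $\mathrm{Pol}(G)$" to "$\chi_\alpha$ extends to a character of the full $C^*$-algebra $H_u$" — i.e.\ identifying the receptacle as $G_{cl}$ rather than as some a priori larger set of algebraic characters. This is where the identification $\mathrm{Pol}(H)=\mathrm{Pol}(H_u)$ and the universal property of $H_u$ are essential: a $*$-homomorphism $\mathrm{Pol}(H_u)\to\bC$ is a one-dimensional $*$-representation, hence extends to $H_u$ by universality, and this extension is the character whose convolution inverse realizes $\chi_\alpha$ as an element of the group $G_{cl}$. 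Everything else is bookkeeping with the correspondence $\chi\leftrightarrow\alpha_\chi$ already set up in \Cref{eq:pa} and \Cref{nt.pa}.
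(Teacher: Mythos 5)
Your argument is correct and follows essentially the same route as the paper, which states this proposition with no separate proof precisely because it is meant as a summary of the preceding discussion ($\chi_\alpha=\varepsilon\circ\alpha$ on $\mathrm{Pol}(G)$ extending to $H_u$ by universality) combined with \Cref{pr.cpct} and \Cref{cor.bij}. Your additional care with the topological points (pointwise convergence on the finite-dimensional blocks $H_\gamma$ rather than boundedness of $\varepsilon$ on $H$, and closedness of the image via \Cref{pr.cpct}) simply makes explicit what the paper leaves implicit.
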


Conversely, for every character $\chi:H\to \bC$ we can define the translation $\alpha_\chi$ via the left hand half of \Cref{eq:pa}. We thus obtain the following version of \Cref{pr.cpct}.

\begin{proposition}\label{pr.char-in-t}
  The map
  \begin{equation*}
    \mathrm{pt}(G)\ni \chi\mapsto \alpha_\chi
  \end{equation*}
  is an embedding of $\mathrm{pt}(G)$ as a closed sub-semigroup of the
  compact group $\cT(G)$.  \qedhere
\end{proposition}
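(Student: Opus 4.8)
The plan is to exhibit $\chi\mapsto\alpha_\chi$ as a continuous injective homomorphism of semigroups from the compact space $\mathrm{pt}(G)$ into the Hausdorff group $\cT(G)$, after which a standard compactness argument settles everything at once. Equip $\mathrm{pt}(G)$ with the weak-$*$ topology; since it is the Gelfand spectrum of the unital commutative $C^*$-algebra $H_{cl}$ (the abelianization of the unital algebra $H$), it is compact Hausdorff. Note first that $\alpha_\chi:=(\chi\otimes\id)\circ\Delta$ really lands in $\cT(G)$: it is a unital $*$-endomorphism of $H$ because $\chi$ and $\Delta$ are $*$-homomorphisms, and the translation square \Cref{eq:tr} for $\alpha_\chi$ is exactly coassociativity of $\Delta$. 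So the map is well defined.

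Next I would record the semigroup structure. Put on $\mathrm{pt}(G)$ the convolution $\chi*\psi:=(\chi\otimes\psi)\circ\Delta$; this is again a character of $H$, since $\chi\otimes\psi$ is a character of $H\otimes H$ and $\Delta:H\to H\otimes H$ is a $C^*$-morphism, so $(\mathrm{pt}(G),*)$ is a semigroup. A one-line Sweedler computation using coassociativity gives $\alpha_{\chi*\psi}=\alpha_\psi\circ\alpha_\chi$, and by the \emph{opposite}-composition convention of \Cref{def.tr} the right-hand side is precisely the product of $\alpha_\chi$ and $\alpha_\psi$ in $\cT(G)$. Hence $\chi\mapsto\alpha_\chi$ is a homomorphism of semigroups, and in particular its image is a sub-semigroup of $\cT(G)$.

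Injectivity and continuity come next. For injectivity, restrict $\alpha_\chi$ to the dense Hopf $*$-subalgebra $\mathrm{Pol}(G)$, which it preserves, and apply the counit: $\varepsilon\circ\alpha_\chi|_{\mathrm{Pol}(G)}=(\chi\otimes\varepsilon)\circ\Delta|_{\mathrm{Pol}(G)}=\chi|_{\mathrm{Pol}(G)}$ by the counit axiom, while a character of $H$ is determined by its restriction to the dense subalgebra $\mathrm{Pol}(G)$; this simply says that the map $\alpha\mapsto\chi_\alpha$ of \Cref{pr.t-in-char} is a left inverse on the image. For continuity, compute the action on matrix coefficients: if $u_\gamma=(u^\gamma_{jk})$ is the unitary generating $H_\gamma$, then $\alpha_\chi(u^\gamma_{jk})=\sum_l\chi(u^\gamma_{jl})\,u^\gamma_{lk}$, so under the embedding \Cref{eq:1} of \Cref{pr.cpct} the translation $\alpha_\chi$ becomes the tuple $\big(\chi(u_\gamma)\big)_\gamma\in U_H=\prod_\gamma U_{d_\gamma}$ obtained by applying $\chi$ entrywise. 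Each entry $\chi\mapsto\chi(u^\gamma_{jl})$ is weak-$*$ continuous, so $\chi\mapsto\alpha_\chi$ is continuous into $U_H$; since \Cref{pr.cpct} identifies $\cT(G)$ topologically with a closed subspace of $U_H$, it is continuous into $\cT(G)$.

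Finally, assemble the pieces: $\chi\mapsto\alpha_\chi$ is a continuous injection from the compact Hausdorff space $\mathrm{pt}(G)$ into the Hausdorff space $\cT(G)$, hence a homeomorphism onto its image; that image is then compact, so closed, and by the second paragraph it is a sub-semigroup. I do not expect a genuine obstacle here: the only things needing care are bookkeeping — matching the convolution on $\mathrm{pt}(G)$ against the opposite composition on $\cT(G)$ so that the homomorphism points the right way, and invoking \Cref{pr.cpct} (rather than re-proving it) so that $\cT(G)$'s topology is the subspace topology from $U_H$, which is what makes continuity and closedness automatic.
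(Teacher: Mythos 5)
Your proposal is correct and follows essentially the route the paper intends: the paper states this proposition without a separate proof, treating it as immediate from the construction $\alpha_\chi=(\chi\otimes\id)\circ\Delta$ in \Cref{eq:pa} together with \Cref{pr.cpct}, and your write-up simply makes explicit the routine verifications (coassociativity gives the translation square, convolution matches opposite composition, $\varepsilon\circ\alpha_\chi=\chi$ gives injectivity, and weak-$*$ compactness of $\mathrm{pt}(G)$ plus continuity into $U_H$ gives closedness).
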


Moreover, since for full CQGs we have $G=G_u$ and hence the sandwich
\begin{equation*}
  \mathrm{pt}(G)\subseteq \cT(G)\subseteq G_{cl}
\end{equation*}
provided by \Cref{pr.t-in-char,pr.char-in-t} collapses, we obtain

\begin{theorem}\label{cor.full}
  If $G$ is a full compact quantum group, $\cT(G)$ is isomorphic to the maximal classical subgroup $\mathrm{pt}(G)\subseteq G$. \qedhere
\end{theorem}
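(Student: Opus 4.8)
The plan is to stitch together the three results already proved in this subsection. By \Cref{pr.t-in-char} the assignment $\alpha\mapsto\chi_\alpha$ is an injective group homomorphism $\cT(G)\hookrightarrow G_{cl}$ whose image is closed; by \Cref{pr.char-in-t} the assignment $\chi\mapsto\alpha_\chi$ is an injective semigroup homomorphism $\mathrm{pt}(G)\hookrightarrow\cT(G)$ with closed image. These two maps are mutually inverse on the nose: starting from $\alpha$, passing to $\chi_\alpha=\varepsilon\circ\alpha$ and then back to $\alpha_{\chi_\alpha}$ recovers $\alpha$ because a right $H$-comodule endomap of a coalgebra is determined by its composite with the counit, and conversely $\chi_{\alpha_\chi}=\varepsilon\circ\alpha_\chi=\chi$ by the commuting triangle in \Cref{eq:pa}. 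So the composite embeddings $\mathrm{pt}(G)\subseteq\cT(G)\subseteq G_{cl}$ exhibit $\cT(G)$ as squeezed between $\mathrm{pt}(G)$ and $G_{cl}$.

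The one input specific to the full case is the identification $\mathrm{pt}(G)=G_{cl}$. This is immediate from the definitions recalled in the text: $G_{cl}$ is by definition the group of characters of $H^u=C^u_0(G)$, while $\mathrm{pt}(G)$ (\Cref{nt.cl}) is the space of characters $H\to\bC$; for a full CQG one has $H=C(G)=C^u(G)=H_u$, so the two sets of characters coincide, and under this identification the convolution product on $G_{cl}$ matches the semigroup structure transported along $\chi\mapsto\alpha_\chi$ (the diagram \Cref{eq:9}, in the compact setting, is exactly the left half of \Cref{eq:pa}). Hence the outer two terms of the sandwich agree, forcing $\cT(G)=\mathrm{pt}(G)=G_{cl}$ as topological groups.

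I would therefore write the proof in essentially two lines: invoke \Cref{pr.t-in-char,pr.char-in-t} to get $\mathrm{pt}(G)\subseteq\cT(G)\subseteq G_{cl}$, observe that $G=G_u$ for a full CQG so that $\mathrm{pt}(G)=G_{cl}$ by definition, and conclude that all three coincide. The only point deserving a sentence of care — and the nearest thing to an obstacle — is checking that the maps of \Cref{pr.t-in-char} and \Cref{pr.char-in-t} really are inverse to each other as maps of sets (so that the sandwich is a genuine chain of inclusions of the \emph{same} objects, not just abstractly isomorphic ones), and that they are homeomorphisms for the stated topologies; but both facts are built into the construction $\alpha\leftrightarrow\chi$ via $\chi=\varepsilon\circ\alpha$ and the pointwise-norm topologies, so no real work remains beyond assembling the pieces.
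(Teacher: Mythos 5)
Your proposal is correct and follows essentially the same route as the paper: the paper likewise deduces the theorem by combining \Cref{pr.t-in-char,pr.char-in-t} into the sandwich $\mathrm{pt}(G)\subseteq\cT(G)\subseteq G_{cl}$ and observing that $G=G_u$ for a full CQG forces $\mathrm{pt}(G)=G_{cl}$, collapsing the chain. Your extra check that $\alpha\mapsto\chi_\alpha=\varepsilon\circ\alpha$ and $\chi\mapsto\alpha_\chi$ are mutually inverse (where defined) is exactly the content built into \Cref{eq:pa} that the paper leaves implicit, so no gap remains.
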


We have an analogous result for {\it reduced} compact quantum groups:

\begin{theorem}\label{pr.red}
  If $G$ is a reduced compact quantum group then the embedding $\cT(G)\to G_{cl}$ from \Cref{pr.t-in-char} is an isomorphism.
\end{theorem}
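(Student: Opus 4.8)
The plan is to show that every character $\chi:H^u\to\bC$ of the full algebra descends to the reduced algebra $H=C(G)$, so that the inclusion $\cT(G)\subseteq G_{cl}$ of \Cref{pr.t-in-char} is surjective. Equivalently, I want to show that for every $\chi\in G_{cl}$ the associated translation $\alpha_\chi$ of $\mathrm{Pol}(G)$ is bounded with respect to the \emph{reduced} norm on $\mathrm{Pol}(G)$, hence extends to a $C^*$-endomorphism of $H$. Once boundedness is in hand, the extension is automatically a translation of $H$ (the defining diagram \Cref{eq:tr} holds on the dense subalgebra $\mathrm{Pol}(G)$ and hence everywhere), and the composite $\mathrm{pt}(G)\to\cT(G)\to G_{cl}$ being the identity forces both maps to be isomorphisms since $\mathrm{pt}(G)=G_{cl}$ for the reduced abelianization.

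The key point is a factorization of $\alpha_\chi$ through right translation by a unitary. Concretely, on each matrix block $H_\gamma$ with corepresentation unitary $u_\gamma=(u_{ij})$, the translation $\alpha_\chi$ sends $u_{ij}\mapsto \sum_k \chi(u_{ik})u_{kj}$, i.e. it acts as left multiplication of the matrix $u_\gamma$ by the \emph{scalar} unitary matrix $T_\gamma^\chi=(\chi(u_{ik}))_{i,k}$ (unitarity of $T_\gamma^\chi$ is exactly the statement that $\chi$ is a $*$-character applied to the unitarity relations for $u_\gamma$). The standard way to see boundedness of such a map is to exhibit it as conjugation by a unitary in (a multiplier algebra related to) $H$: one uses the multiplicative unitary $W\in M(C_0(\widehat G)\otimes C_0(G))$ and the fact that $\chi$, being a character of $H^u$, corresponds to a group-like unitary $u_\chi$ in the multiplier algebra $M(C_0(\widehat G))$ of the dual (this is the dual incarnation of a point of $G$). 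Then the formula $\alpha_\chi=(\omega_{u_\chi}\otimes\id)$ applied to $W$ — more precisely $\alpha_\chi(x)=(\chi\otimes\id)\Delta(x)$ rewritten via $\Delta(x)=W^*(1\otimes x)W$ and the pairing of $\chi$ with $W$ — realizes $\alpha_\chi$ as $x\mapsto u_\chi^* x u_\chi$ (a spatial implementation on $\cH_\varphi$), which is manifestly $\|\cdot\|$-contractive and $*$-preserving on $H\subseteq B(\cH_\varphi)$. This is the argument I would write out.

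The main obstacle is making the ``$\chi$ pairs with $W$ to give a unitary multiplier of $C_0(\widehat G)$'' step precise and checking that conjugation by it genuinely preserves $C_0(G)$ (not merely $L^\infty(G)$). The cleanest route is probably to invoke the correspondence between characters of $H^u$ and $*$-homomorphisms $\bC\to M(C_0(\widehat G))$, i.e. unitary corepresentations of the trivial group, equivalently group-like elements; this is exactly the content that $G_{cl}$ is the dual of a quotient-group-like object, and it is standard in the LCQG literature (it is implicit in the discussion of $G_{cl}=G_u$-characters). Alternatively, and perhaps more elementarily given that we are in the compact case and only need the \emph{reduced} estimate, one can argue directly: the reduced norm on $\mathrm{Pol}(G)$ is the one in which the Haar state $h$ is faithful, i.e. the norm of the GNS representation $\pi_h$ on $L^2(G)$; and left multiplication of $u_\gamma$ by the scalar unitary $T_\gamma^\chi$ is implemented on $L^2(G)$ by a unitary operator built blockwise from the $T_\gamma^\chi$ together with the Peter–Weyl orthogonality, because $\alpha_\chi$ preserves $h$ (indeed $h\circ\alpha_\chi = h$, since $h$ is the unique translation-invariant state and $\alpha_\chi$ respects the comodule structure, so $(\id\otimes h)\circ$ stuff collapses to $h$). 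Preservation of the Haar state gives an isometry $\mathrm{Pol}(G)\to\mathrm{Pol}(G)\subseteq L^2(G)$ intertwining the left actions, whence $\alpha_\chi$ extends to a $*$-endomorphism of $\pi_h(\mathrm{Pol}(G))''\supseteq H$, and one checks it carries the $C^*$-algebra $H=C(G)$ into itself because it carries the dense $*$-subalgebra $\mathrm{Pol}(G)$ into itself and is norm-contractive there. I expect the $h\circ\alpha_\chi=h$ identity and the resulting spatial implementation to be the crux; everything after that is bookkeeping.
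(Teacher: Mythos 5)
Your second (``more elementary'') argument is essentially the paper's own proof: one checks $h\circ\alpha_\chi=h$ via invariance of the Haar state, and since the reduced $C(G)$ is exactly the GNS closure of $\mathrm{Pol}(G)$ with respect to $h$, the translation $\alpha_\chi$ extends (isometrically) to $H$, giving surjectivity of $\cT(G)\to G_{cl}$; injectivity is already \Cref{pr.t-in-char}. Only a side remark is off: the claim that $\mathrm{pt}(G)=G_{cl}$ for reduced $G$ is false in general (for non-coamenable $G$ the counit, and possibly every character, fails to factor through $C(G)$), but this claim---like the first sketch via the multiplicative unitary $W$---is not needed for the argument to close.
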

\begin{proof}
  Let $H=C(G)$. We have to argue that every element $\chi\in G_{cl}$, acting as an automorphism of the CQG algebra $\mathrm{Pol}(H)$, lifts to an automorphism of the reduced $C^*$ closure $H$ of $\mathrm{Pol}(H)$. This follows from the fact that $H$ is the GNS closure with respect to the Haar state $h$ on $\mathrm{Pol}(H)$, and $h$ is preserved by $\chi$.
\end{proof}

\subsection{The locally compact case}\label{subse.lcqg}

We write $H=C_0(G)$ and $H^u=C_0^u(G)$. The present subsection is the reason why we have chosen to work with {\it left} comodule morphisms $\alpha$: it allows better agreement with the literature on LCQGs that we reference here. 

We can define the group $\cT(G_u)=\cT(H^u)$ of translations of $G_u$ as in \Cref{def.tr}; it consists of non-degenerate endomorphisms $\alpha$ of $H^u$ that preserve the right regular coaction of $H^u$, in the sense that the diagram
  \begin{equation}\label{eq:8}
    \begin{tikzpicture}[auto,baseline=(current  bounding  box.center)]
      \path[anchor=base] (0,0) node (hl)
      {$H^u$} +(2,.5) node (hhu) {$M(H^u\otimes H^u)$} +(2,-.5) node (hd) {$M(H^u)$} +(5,0) node (hhr) {$M(H^u\otimes H^u)$};
      \draw[->] (hl) to[bend left=6] node[pos=.5,auto] {$\scriptstyle\Delta$} (hhu);
      \draw[->] (hl) to[bend right=6] node[pos=.5,auto,swap] {$\scriptstyle\alpha$} (hd);
      \draw[->] (hhu) to[bend left=6] node[pos=.5,auto] {$\scriptstyle \alpha \otimes\mathrm{id}$} (hhr);
      \draw[->] (hd) to[bend right=6] node[pos=.5,auto,swap] {$\scriptstyle\Delta$} (hhr);      
    \end{tikzpicture}
  \end{equation}
analogous to \Cref{eq:tr} commutes.

We again equip $\cT(G_u)$ with its {\it standard topology}: as before, this is the pointwise-norm topology when regarded as a space of $C^*$-algebra morphisms $H^u\to M(H^u)$.

\begin{remark}
  It is easy to see that with its standard topology, $\cT(G_u)$ is a locally compact group.
\end{remark}

As in the discussion at the beginning of \Cref{se.tr}, we denote by $G_{cl}$ the maximal classical closed subgroup of $G_u$; it is the spectrum of the abelianization $(H^u)_{cl}$, i.e. the space of characters of $H^u$. The version of \Cref{eq:pa} relevant to universal locally compact quantum groups is
\begin{equation}\label{eq:pa-lc}
  \begin{tikzpicture}[auto,baseline=(current  bounding  box.center)]
    \path[anchor=base] (0,0) node (h1)
      {$H^u$} +(2,.5) node (hh) {$M(H^u\otimes H^u)$} +(5,0) node (h2) {$M(H^u)$} +(7,0) node (k) {$k$,};
      \draw[->] (h1) to[bend left=6] node[pos=.5,auto] {$\scriptstyle\Delta$} (hh);
      \draw[->] (hh) to[bend left=6] node[pos=.5,auto] {$\scriptstyle \chi\otimes\mathrm{id}$} (h2);
      \draw[->] (h1) to[bend right=6] node[pos=.5,auto,swap] {$\scriptstyle\alpha$} (h2);
      \draw[->] (h2) to node[pos=.5,auto] {$\scriptstyle\varepsilon$} (k);
      \draw[->] (h1) to[bend right=20] node[pos=.5,auto,swap] {$\scriptstyle\chi$} (k);      
  \end{tikzpicture}
\end{equation}
and defines back-and-forth mutually inverse maps
\begin{equation}\label{eq:bf}
  \begin{tikzpicture}[auto,baseline=(current  bounding  box.center)]
    \path[anchor=base] (0,0) node (t)
      {$\cT(G_u)$} +(4,0) node (cl) {$G_{cl}$,};
      \draw[->] (t) to[bend left=10] node[pos=.5,auto] {$\alpha\mapsto \chi_\alpha$} (cl);
      \draw[->] (cl) to[bend left=10] node[pos=.5,auto] {$\alpha_\chi\mapsfrom \chi$} (t);      
  \end{tikzpicture}
\end{equation}
where the fact that $\chi$ is not the zero map follows from the non-degeneracy of $\alpha$. We thus arrive at

\begin{theorem}\label{th.lcqg-univ}
  The rightward map in \Cref{eq:bf} defines an isomorphism
  \begin{equation*}
    \cT(G_u)\cong G_{cl}.
  \end{equation*}
\end{theorem}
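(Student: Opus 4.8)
The plan is to establish that the two maps in \Cref{eq:bf} are well-defined continuous group homomorphisms that are mutually inverse, reducing the whole statement to routine bookkeeping once a single analytic input is in place. First I would check that for $\chi\in G_{cl}$ the recipe $\alpha_\chi = (\chi\otimes\id)\Delta$ genuinely lands inside $M(H^u)$ and is a nondegenerate $*$-homomorphism: this is exactly the statement, recalled in the Introduction around \Cref{eq:9}, that a character of $H^u$ induces a morphism of $H^u$ in the non-unital $C^*$-category, so I would cite the standard slice-map/strict-continuity arguments (extensions of $\chi\otimes\id$ to $M(H^u\otimes H^u)$, nondegeneracy because $\Delta$ is nondegenerate and $\chi\neq 0$). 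That $\alpha_\chi$ is a translation — i.e. that \Cref{eq:8} commutes — is immediate from coassociativity $(\id\otimes\Delta)\Delta=(\Delta\otimes\id)\Delta$: applying $\chi\otimes\id\otimes\id$ to both sides realizes $(\alpha_\chi\otimes\id)\Delta = \Delta\alpha_\chi$.

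Next I would verify the reverse assignment $\alpha\mapsto\chi_\alpha$. Given a translation $\alpha$, set $\chi_\alpha := \varepsilon_u\circ\alpha$ (extended to $M(H^u)$ via the counit of the universal object), exactly as in the algebraic warm-up preceding \Cref{nt.pa}. Applying $\id\otimes\varepsilon_u$ to the translation identity $(\alpha\otimes\id)\Delta = \Delta\alpha$ and using $(\id\otimes\varepsilon_u)\Delta=\id$ recovers $\alpha = (\chi_\alpha\otimes\id)\Delta = \alpha_{\chi_\alpha}$; so $\alpha\mapsto\chi_\alpha\mapsto\alpha_{\chi_\alpha}$ is the identity. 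Conversely $\varepsilon_u\circ\alpha_\chi = \varepsilon_u\circ(\chi\otimes\id)\Delta = \chi\circ(\id\otimes\varepsilon_u)\Delta = \chi$, so the composite the other way is also the identity. I would then note that $\chi_\alpha$ is a character because it is a composition of the $*$-homomorphisms $\alpha$ and $\varepsilon_u$ (the universal counit is a genuine $*$-homomorphism $H^u\to\bC$), and that $\chi_\alpha$ is nonzero precisely because $\alpha$ is nondegenerate, as already remarked in the text just before the theorem statement. This gives the set-theoretic bijection.

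It remains to see that the bijection is a homeomorphism and an isomorphism of groups. For the algebraic structure, the monoid operation on $\cT(G_u)$ is opposite composition, and $\alpha_\chi\circ\alpha_{\chi'} = \alpha_{\chi'*\chi}$ follows by a short diagram chase from coassociativity — slice $\Delta$ twice and contract with $\chi'$ then $\chi$, getting the convolution $(\chi'\otimes\chi)\Delta = \chi'*\chi$; with the opposite-composition convention this matches the convolution product on $G_{cl}$, as flagged in the remark after \Cref{def.tr}. Inverses then come for free from \Cref{cor.bij}-type reasoning, or more directly from the group structure already present on $G_{cl}$. For continuity: if $\chi_\eta\to\chi$ in $G_{cl}$ (weak-$*$), then $(\chi_\eta\otimes\id)\Delta(x)\to(\chi\otimes\id)\Delta(x)$ in norm for each $x\in H^u$ by standard slice-map continuity, so $\chi\mapsto\alpha_\chi$ is continuous into the pointwise-norm topology; and $\alpha\mapsto\varepsilon_u\circ\alpha$ is obviously continuous for the same topologies, so the inverse is continuous too.

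The main obstacle I anticipate is purely the multiplier-algebra analysis in the first step: one must be careful that $(\chi\otimes\id)\Delta$, a priori defined on $H^u$ with values in $M(H^u)$, is nondegenerate, and that all the diagram chases (coassociativity, counit identities, the convolution computation) remain valid after passing to multiplier algebras and strict extensions. This is where the "non-unital" subtleties live — everything is formal once one grants that slice maps by characters extend strictly and compose with $\Delta$ the way they do in the algebraic (bialgebra) setting. Since the bookkeeping for morphisms of LCQGs is standard (cf.\ the references to \cite{SLW12,DKSS} in \Cref{subse.prel-lcqg}), I would keep that part brief and cite it, spending the few real lines of the proof on the two counit identities that pin down the inverse bijection.
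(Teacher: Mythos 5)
Your proposal is correct and takes essentially the same route as the paper: the counit/coassociativity bookkeeping producing the mutually inverse group homomorphisms is exactly the discussion the paper places just before the theorem (around \Cref{eq:pa-lc}), and the proof proper is the continuity check, which the paper settles by identifying characters of $H^u$ with characters of the abelianization and invoking Gelfand--Naimark --- the same weak-$*$ identification of the topology on $G_{cl}$ that you use implicitly. The one step to flag is that ``standard slice-map continuity'' for $\chi\mapsto(\chi\otimes\id)\Delta(x)$ is not quite automatic, since $\Delta(x)$ lives only in $M(H^u\otimes H^u)$ and the usual $3\varepsilon$ argument applies after multiplying into $H^u\otimes H^u$ via the cancellation density conditions; this deserves a word, though the paper's own proof is equally brief at the corresponding point.
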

\begin{proof}
  We already have mutually inverse bijections preserving the group structures, so the only claim still to be verified is that these bijections are both continuous.

  It is immediate from the definition of the pointwise-norm topology that for a fixed morphism $f$ of (non-unital) $C^*$-algebras the map $g\mapsto g\circ f$ is continuous. Since both $\alpha\mapsto \chi_{\alpha}$ and $\chi\mapsto \alpha_{\chi}$ are of this form, this identifies $\cT(G_u)$ topologically with the space of characters $H^u\to \bC$ equipped with the pointwise-norm topology.

  Since characters $H^u\to \bC$ coincide with characters of the abelianization $C_0(G_{cl})$ of $H^u$ again topologized via the pointwise-norm topology, the claim amounts to the fact that the locally compact topology of $G_{cl}$ can be recovered as the pointwise-norm topology on
  \begin{equation*}
    \text{characters } C_0(G_{cl})\to \bC. 
  \end{equation*}
  This, however, is nothing but the Gelfand-Naimark theorem.
\end{proof}

We can now define $\cT(H)=\cT(G)$ as in the universal case. The analogue, in this case, of \Cref{cor.bij} is \Cref{th.lcqg-red} below. It is analogous to \cite[Theorems 3.7, 3.9 and 3.11]{kn}, being a $C^*$ (as opposed to von Neumann) variant of that material. In fact, we will apply the results of \cite{kn} directly once we transport the problem to GNS von Neumann algebras.

\begin{theorem}\label{th.lcqg-red}
  Every translation of $H^u=C^u_0(G)$ descends to one of $H=C_0(G)$, and this correspondence induces an isomorphism
  \begin{equation*}
    \cT(G)\cong G_{cl}
  \end{equation*}
\end{theorem}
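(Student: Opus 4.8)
The plan is to bootstrap from the two isomorphisms already in hand, namely $\cT(G_u)\cong G_{cl}$ from \Cref{th.lcqg-univ} and the von Neumann algebra statement of \cite{kn}, by passing through the GNS construction attached to the left Haar weight $\varphi$. The first order of business is to show that every translation $\alpha\in\cT(G_u)$ of $H^u=C_0^u(G)$ descends to a $C^*$-algebra morphism of $H=C_0(G)$. By \Cref{th.lcqg-univ} we may write $\alpha=\alpha_\chi$ for a character $\chi:H^u\to\bC$, i.e. $\alpha=(\chi\otimes\id)\Delta$ with $\Delta$ the comultiplication of $H^u$. Precomposing the reducing morphism $\Lambda_G:H^u\to H$ (which carries the universal comultiplication to the reduced one) with $\alpha_\chi$ and using that the character $\chi$ factors through $H^u$, one sees that $\Lambda_G\circ\alpha_\chi=(\chi\otimes\id)\circ(\id\otimes\Lambda_G)\circ\Delta_{H^u}=(\chi\otimes\Lambda_G)\Delta_{H^u}$, which by naturality of the reducing map equals $(\chi'\otimes\id)\Delta_H\circ\Lambda_G$ for the induced character $\chi'$ of $H$; hence $\alpha_\chi$ descends to the reduced level precisely because the character $\chi$ itself descends, there being a canonical surjection $H^u\to H$. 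So the map $\cT(G_u)\to\cT(G)$ is well defined, and by construction it is compatible with the parametrisation $\chi\mapsto\alpha_\chi$ on both sides.

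Next I would identify $\cT(G)$ with $G_{cl}$. The argument of \Cref{pr.t-in-char,pr.char-in-t}, transplanted to the locally compact setting, already gives embeddings $\mathrm{pt}(G)\hookrightarrow\cT(G)\hookrightarrow G_{cl}$, where $\mathrm{pt}(G)$ is the space of characters of the reduced algebra $H$ and $G_{cl}$ is the space of characters of $H^u$; the only thing preventing this sandwich from collapsing on its own is that a priori a character of $H^u$ need not factor through $H$. This is exactly where \cite{kn} enters: passing to the GNS space $\cH_\varphi$ and the von Neumann algebra $L^\infty(G)=\pi_\varphi(H)''$, any $\alpha\in\cT(G)$ extends, by normality considerations and the fact that $\alpha$ preserves the right regular coaction, to a normal right-coaction-preserving morphism $L^\infty(G)\to L^\infty(G)$; by \cite[Theorems 3.7, 3.9 and 3.11]{kn} such morphisms are precisely the translations by elements of $G_{cl}$, and in particular every such element of $G_{cl}$ is realised. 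Pulling back along the dense inclusion, the embedding $\cT(G)\to G_{cl}$ is surjective.

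The main obstacle is the extension-to-$L^\infty(G)$ step: one must check that an element of $\cT(G)$, which is only assumed to be a $C^*$-morphism of $H$ respecting the coaction, genuinely yields a \emph{normal} endomorphism of the von Neumann algebra to which the results of \cite{kn} apply, and that the coaction-preservation survives the extension. The natural way to see this is to observe that $\alpha=\alpha_\chi$ with $\chi$ a character, so $\alpha=(\chi\otimes\id)\Delta$; since $\Delta$ on $L^\infty(G)$ is implemented by the multiplicative unitary $W$ via $\Delta(x)=W^*(1\otimes x)W$, and $\chi$ applied to the first leg of $W$ (sitting in $M(C_0(\widehat G)\otimes C_0(G))$) produces a well-defined element of $M(C_0(G))$ — indeed a unitary multiplier, by the comultiplication identities $(\Delta\otimes\id)W=W_{13}W_{23}$ and $(\id\otimes\Delta)W=W_{13}W_{12}$ together with the character property of $\chi$ — the map $\alpha_\chi$ is visibly a conjugation-type operation on $L^\infty(G)$, hence normal. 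From there the bijection with $G_{cl}$ is \cite[Theorems 3.7, 3.9 and 3.11]{kn}, and continuity in both directions follows, exactly as in the proof of \Cref{th.lcqg-univ}, from the fact that $g\mapsto g\circ f$ is pointwise-norm continuous for fixed $f$, reducing the topological claim once more to Gelfand--Naimark applied to $C_0(G_{cl})$.
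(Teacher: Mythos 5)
There are two genuine gaps. First, your descent argument rests on a false claim: you assert that $\alpha_\chi$ descends to $H$ ``precisely because the character $\chi$ itself descends,'' but a character of $H^u$ need \emph{not} factor through the reduced quotient --- you say so yourself one paragraph later, and it fails concretely, e.g.\ for $G=\widehat{F_2}$ the trivial character (counit) of $C^*(F_2)$ does not factor through $C^*_r(F_2)$, although the corresponding translation (the identity) obviously descends. So there is no ``induced character $\chi'$ of $H$'' and the expression $(\chi'\otimes\id)\Delta_H\circ\Lambda_G$ is unavailable. The descent is true, but for a different reason: either one uses the half-lifted coaction $\Delta^{u,r}\colon H\to M(H^u\otimes H)$ with $(\id\otimes\Lambda_G)\Delta_{H^u}=\Delta^{u,r}\circ\Lambda_G$, so that $\Lambda_G\circ\alpha_\chi=(\chi\otimes\id)\Delta^{u,r}\circ\Lambda_G$ without ever asking $\chi$ to descend, or, as the paper does, one notes that translations by elements of $G_{cl}$ preserve the left Haar weight and that $H$ is exactly the quotient of $H^u$ on which that weight is faithful.

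Second, your treatment of what you correctly identify as the main obstacle --- extending an arbitrary $\alpha\in\cT(H)$ to a normal endomorphism of $L^\infty(G)$ --- is circular. You propose to ``observe that $\alpha=\alpha_\chi$ with $\chi$ a character,'' but producing such a $\chi$ from a translation of the \emph{reduced} algebra is essentially the content of the theorem: the counit trick $\chi_\alpha=\varepsilon\circ\alpha$ that worked for $H^u$ (and, via $\mathrm{Pol}(G)$, in the compact case) is unavailable because $C_0(G)$ has no bounded counit in general, and ``by normality considerations'' in your second paragraph is not an argument. (In addition, the multiplicative-unitary bookkeeping is off: the first leg of $W$ lies in $M(C_0(\widehat G))$, so a character of $H^u$ cannot be slotted into it; one would need the group-like unitary in $M(C_0(\widehat G))$ attached to $\chi$, which again presupposes $\chi$.) The paper fills precisely this gap by a weight computation: applying $\omega\otimes\varphi$ to the coaction-compatibility square \Cref{eq:8} and using left invariance \Cref{eq:2} gives $\omega(1)\varphi(\alpha(x))=\varphi((\omega\circ\alpha\otimes\id)\Delta x)=\omega(\alpha(1))\varphi(x)=\omega(1)\varphi(x)$ for $x\in M^+_\varphi$, hence $\varphi\circ\alpha=\varphi$; invariance of the Haar weight is what allows $\alpha$ to pass to the GNS von Neumann algebra $L^\infty(G)$ as a normal coaction-preserving map, and only then do the cited results of \cite{kn} apply as you intend. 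Your remaining points (invoking \cite{kn} and handling the topology via Gelfand--Naimark as in \Cref{th.lcqg-univ}) are fine.
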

\begin{proof}
  We already know from \Cref{th.lcqg-univ} that translations of $H^u$ are actual left translations by elements in the classical locally compact group $G_{cl}$. These preserve the left-invariant Haar weight $\varphi$ on $H^u$ and hence descend to the quotient $H$ of $H^u$ on which $\varphi$ is faithful.

  It now remains to show that {\it every} translation of $H$ arises in this fashion, as a left translation by some element in $G_{cl}$. We fix a translation $\alpha\in \cT(H)$ throughout the rest of the proof.

  {\bf Claim: $\alpha$ lifts to a normal self-map $L^{\infty}(G)$.}

  The goal here is to show that $\alpha$ sends $M^+_{\varphi}$ to $\overline{M}^+_{\varphi}$ and $\varphi\circ\alpha = \varphi$. To that end, recall that \Cref{eq:2} is valid for all positive elements in $H$ (or indeed even $M(H)$). For that reason, applying $\omega\otimes\varphi$ to the image of $x\in M^+_{\varphi}$ through \Cref{eq:8} we obtain
\begin{equation*}
  \omega(1)\varphi(\alpha(x)) = \varphi((\omega\otimes \id)\Delta\alpha(x)) = \varphi((\omega\circ\alpha\otimes\id) \Delta x)
\end{equation*}
for all positive $x\in M^+_{\varphi}$ and positive functionals $\omega\in H^*_+$, where the second equality uses the commutativity of \Cref{eq:8}. In turn, this equals
\begin{equation*}
  \omega(\alpha(1))\varphi(x) = \omega(1)\varphi(x). 
\end{equation*}
This does indeed prove the desired conclusion $\varphi\circ\alpha = \varphi$ and hence Claim 1 because then $\alpha$ lifts to the closure $L^{\infty}(G)$ of $H$ in the GNS representation of the $\alpha$-invariant weight $\varphi$.

As explained before, once we have lifted $\alpha$ to $L^{\infty}$ we can conclude by \cite[Theorems 3.7, 3.9 and 3.11]{kn}, with 3.7 and 3.11 applied to $G$ and 3.9 applied to the dual $\widehat{G}$. 
\end{proof}


As a consequence, we have the analogue of \Cref{cor.bij} in the locally compact setting, albeit only in the universal and reduced cases:

\begin{corollary}\label{th.lcqg-iso}
  Every translation of a reduced or universal locally compact quantum group is an isomorphism of the respective $C^*$-algebra $C_0(G)$ or $C_0^u(G)$. 
\end{corollary}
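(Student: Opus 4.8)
The plan is to read this off immediately from the two identifications already established, $\cT(G_u)\cong G_{cl}$ (\Cref{th.lcqg-univ}) and $\cT(G)\cong G_{cl}$ (\Cref{th.lcqg-red}). Unlike in the compact case, where \Cref{cor.bij} was extracted from the compactness of the ambient group $U_H$ together with the cancellative-semigroup trick, here the ambient object $G_{cl}$ is merely locally compact, so that argument is unavailable; instead one exploits the fact that the two displayed isomorphisms already exhibit $\cT$, in either case, as a genuine \emph{group} rather than just a monoid --- it is isomorphic, as a monoid, to the locally compact group $G_{cl}$.

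Consequently any translation $\alpha$ of $C_0(G)$ (resp.\ $C_0^u(G)$) admits a two-sided inverse $\beta$ in $\cT$, and $\beta$ is again a morphism of (possibly non-unital) $C^*$-algebras with $\beta\circ\alpha = \alpha\circ\beta = \id$, composition being the one in that category. Explicitly, if $\alpha = \alpha_\chi$ under \Cref{eq:bf} then $\beta = \alpha_{\chi^{-1}}$, and the composite is $\alpha_\varepsilon = (\varepsilon\otimes\id)\circ\Delta = \id$. All that is then left is the standard observation that an arrow in the category of $C^*$-algebras and non-degenerate morphisms which is invertible in the categorical sense is an honest $*$-isomorphism of $C^*$-algebras; in particular $\alpha$ carries $C_0(G)$ (resp.\ $C_0^u(G)$) bijectively onto itself inside its multiplier algebra.

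The corollary thus carries essentially no new content, the work having been done in \Cref{th.lcqg-univ,th.lcqg-red}; if anything needs care it is the bookkeeping in the non-unital setting --- remembering that the morphisms take values in multiplier algebras and that composition is defined via the canonical extensions to those multiplier algebras --- but there is no genuine obstacle here.
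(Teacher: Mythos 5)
Your argument is correct and is essentially the paper's own: both deduce the corollary from \Cref{th.lcqg-univ,th.lcqg-red}, observing that a translation $\alpha_\chi$ has the translation $\alpha_{\chi^{-1}}$ by the inverse element of $G_{cl}$ as a two-sided inverse, hence is a $*$-isomorphism. Your extra remarks on the non-unital bookkeeping (inverses in the category of non-degenerate morphisms are honest isomorphisms) are fine but add nothing beyond the paper's one-line proof.
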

\begin{proof}
  It is immediate from the characterization of translations given in \Cref{th.lcqg-univ,th.lcqg-red}: they are all translations by elements of the classical group $G_{cl}$, and each such map admits an inverse (namely translation by the inverse of said element).
\end{proof}

\begin{remark}
  There is a more general approach to locally compact quantum groups, based around the notion of a {\it manageable multiplicative unitary} (see e.g. \cite{wor-man,wor-mu,wor-mu2}).

  \Cref{th.lcqg-red} holds in the more general setting as well (i.e. translations are in bijection with the elements of the maximal classical subgroup) because \cite[Proposition 3.2]{dws-pos}, which in turn \cite[Theorem 6]{ss2} paraphrases, holds in the more general setup.
\end{remark}

\bibliographystyle{plain}
\addcontentsline{toc}{section}{References}

\def\polhk#1{\setbox0=\hbox{#1}{\ooalign{\hidewidth
  \lower1.5ex\hbox{`}\hidewidth\crcr\unhbox0}}}
  \def\polhk#1{\setbox0=\hbox{#1}{\ooalign{\hidewidth
  \lower1.5ex\hbox{`}\hidewidth\crcr\unhbox0}}}

\Addresses

\end{document}